\author{
Edita Máčajová, Ján Mazák
\\[3mm]
\\{\tt \{macajova, mazak\}@dcs.fmph.uniba.sk}
\\[5mm]
Comenius University, Mlynská dolina, 842 48 Bratislava\\
}
\date{}
\title{On covering cubic graphs with three perfect matchings}
\theoremstyle{definition}
\newtheorem{theorem}{Theorem}
\newtheorem{lemma}[theorem]{Lemma}
\newtheorem{conjecture}{Conjecture}
\newtheorem{problem}{Problem}
\let\epsilon=\varepsilon
\let\phi=\varphi
\def\Q{\mathbb Q}
\begin{document}

\maketitle

\begin{abstract}
For a bridgeless cubic graph $G$, $m_3(G)$ is the ratio of the maximum number of edges of $G$ covered by the union of $3$ perfect matchings to $|E(G)|$. We prove that for any $r\in [4/5, 1)$, there exist infinitely many cubic graphs $G$ such that $m_3(G) = r$. For any $r\in [9/10, 1)$, there exist infinitely many cyclically $4$-connected cubic graphs $G$ with $m_3(G) = r$.
\end{abstract}

\section{Introduction}
For a bridgeless cubic graph $G$, let $m_3(G)$ be the ratio of the maximum number of edges of $G$ that can be covered by a union of three perfect matchings to $|E(G)|$. This problem was studied by Kaiser et al. in \cite{kaiser}: they proved that $m_3(G)\ge 27/35\approx 0.77$ and conjectured that the best possible lower bound is $4/5$, attained for the Petersen graph.

\begin{conjecture}[\cite{kaiser}]
For every bridgeless cubic graph $G$, $m_3(G)\ge 4/5$.
\label{c1}
\end{conjecture}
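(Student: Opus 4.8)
The plan is to recast the problem as a minimization over triples of perfect matchings and to attack the extremal case directly. Given perfect matchings $M_1,M_2,M_3$, assign to each edge its multiplicity $w(e)=|\{i:e\in M_i\}|\in\{0,1,2,3\}$. Since each $M_i$ meets every vertex exactly once, the weights of the three edges at any vertex $v$ sum to $3$, so locally they follow one of the patterns $(1,1,1)$, $(2,1,0)$, $(3,0,0)$, leaving respectively $0$, $1$, $2$ incident edges of weight $0$. Hence the set $D=\{e:w(e)=0\}$ of uncovered edges has maximum degree $2$, i.e.\ it is a disjoint union of paths and cycles, and
\[
m_3(G)\;=\;1-\frac{|D|}{|E(G)|}.
\]
Thus Conjecture~\ref{c1} is exactly the statement that one can always choose $M_1,M_2,M_3$ so that at most $|E(G)|/5$ edges remain uncovered. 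First I would dispose of $3$-edge-colorable graphs (where $D=\emptyset$) and reduce the general case to cyclically $4$-edge-connected snarks: across a nontrivial $2$- or $3$-edge-cut one splits $G$ into smaller bridgeless cubic graphs, covers each by induction, and glues the covers along the cut, checking that the uncovered ratio does not increase. The worst instances should be cyclically $4$-edge-connected, where triangles are excluded and short odd structures are scarce.

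For the core case I would start from a perfect matching $M_3$ chosen so that the complementary $2$-factor $F=G-M_3$ has the fewest possible odd cycles. Every uncovered edge lies in $F$, since $M_3$ covers itself; on each even cycle of $F$ the two alternating matchings extend $M_1,M_2$ with no loss. The only source of uncovered edges is the odd cycles of $F$: the restrictions $M_1\cap C$ and $M_2\cap C$ to an odd cycle $C$ are matchings of $C$, so together they cover at most $|C|-1$ edges, and each odd cycle forces at least one uncovered edge, the remaining unmatched vertex being rescued through its $M_3$-edge. The goal would be to bound $|D|$ by a small multiple of the total length of the odd cycles and then invoke the girth of the reduced graph (odd cycles have length $\ge 5$, are pairwise vertex-disjoint, and have total length $\le n$) to conclude $|D|\le |E(G)|/5$.

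The hard part will be the global interaction among the odd cycles. One cannot in general leave a single uncovered edge per odd cycle: the $M_3$-edges used to repair the unmatched vertices must assemble into genuine perfect matchings $M_1,M_2$, and this consistency requirement links the repairs across different cycles. The Petersen graph already exhibits this phenomenon — its $2$-factors consist of two disjoint $5$-cycles, yet every triple of perfect matchings leaves three rather than two edges uncovered, so $m_3=4/5$ is forced and the bound $|D|\le 3n/10$ is saturated exactly. The heart of a proof must therefore quantify how the linking of odd cycles through $M_3$ inflates $|D|$ and show that the inflation never pushes the uncovered ratio above $1/5$; I expect this to demand a careful structural analysis of the alternating paths connecting the odd cycles, rather than any averaging argument over random matchings.

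As a sanity check and a possible source of the extra slack needed, I would run the argument against the known bound $m_3(G)\ge 27/35$ from \cite{kaiser} and against the Fan--Raspaud-type requirement that the three matchings can be chosen with $M_1\cap M_2\cap M_3=\emptyset$, which eliminates the pattern $(3,0,0)$ and already forces $D$ to be a matching. Tightening from $27/35\approx 0.771$ up to $4/5=0.8$ is precisely the gap that the odd-cycle analysis above must close, and the Petersen case pins down the exact configuration at which any proposed bound has to be tight.
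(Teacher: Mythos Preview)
The statement you are attempting to prove is \emph{Conjecture}~\ref{c1}, not a theorem of the paper. The paper does not prove it; it is quoted from \cite{kaiser} as an open problem, noted to follow from the Berge--Fulkerson conjecture, and used only as context (e.g.\ ``If Conjecture~\ref{c1} is true, then our answer for $k=2$ is complete''). There is therefore no proof in the paper to compare your proposal against.

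As for the proposal itself: it is a plan, not a proof, and you say so explicitly. The reduction to cyclically $4$-connected snarks via cutting along $2$- and $3$-cuts is not as routine as you suggest --- after covering the two sides separately you must re-synchronise three perfect matchings across the cut, and it is not clear that this can always be done without increasing the uncovered fraction. More importantly, the heart of your argument is the step where you would bound $|D|$ in terms of the odd cycles of a well-chosen $2$-factor; you correctly observe that one uncovered edge per odd cycle is not in general attainable (Petersen already forces three uncovered edges from two $5$-cycles), and then you stop, writing ``I expect this to demand a careful structural analysis\ldots''. That missing analysis \emph{is} the conjecture: closing the gap from $27/35$ to $4/5$ is exactly what no one knows how to do, and nothing in your outline indicates a mechanism for controlling the global interaction of the odd cycles through the $M_3$-edges. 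Until that step is supplied, the proposal does not constitute a proof.
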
 

Conjecture~\ref{c1} is a consequence of the Berge-Fulkerson conjecture \cite{patel2006unions} and implies Fan-Raspaud conjecture \cite{esperetmazzuoccolo}. It is also known that the problem of determining $m_3$ is NP-complete \cite{esperetmazzuoccolo}. This paper, building on the previous work \cite{AgarskyBP, 3pmManuscript}, investigates the set of all possible values of $m_3$. So far, it is not clear whether this set is an interval or not.

\begin{problem}
Let $r\in(0,1)\cap \Q$ be a fraction and $k\ge 2$ an integer. Does there exist a cyclically $k$-connected cubic graph $G$ such that $m_3(G) = r$?
\end{problem}

Judging from our work described in this article, it appears challenging to find graphs with a low value of $m_3$ among those with cyclic connectivity $4$ or even more. For cyclic connectivities 2 and 3, this task becomes easier because removing an edge or a vertex from the Petersen graph mostly preserves its structure, including a large fraction of edges that cannot be covered. Motivated by these observations, we suggest the following generalization of Conjecture~\ref{c1}.

\begin{problem}
For each $k\ge 2$, determine the largest constant $m_3^{(k)}$ such that every cyclically $k$-connected cubic graph $G$ different from the Petersen graph satisfies $m_3(G)\ge m_3^{(k)}$.
\end{problem}

Note that a graph $G$ has $m_3(G) = 1$ if and only if it is $3$-edge-colourable, so we are only interested in snarks (bridgeless cubic graphs with chromatic index $4$). A colour class in a $3$-edge-colouring of a subgraph is a matching, but it might not be a subset of any perfect matching of the whole graph, so $m_3$ is an invariant only very loosely related to resistance (the minimum number of edges that need to be removed from a cubic graph in order to obtain a $3$-edge-colourable graph). For instance, one can find two edges in the Petersen graph whose removal results in a $3$-edge-colourable subcubic graph with $13$ edges, but this graph will not have a cover consisting of three perfect matchings because the union of any three perfect matchings has at most $12$ edges.

A complete solution to the proposed problems is apparently very hard: for $k = 7$, we do not even know the answer for any single $r$, since no cyclically $7$-connected snarks are known (they are conjectured not to exist \cite{jaeger7conn}). As $k$ increases, the problem becomes more intriguing. In this paper, we provide partial answers for $k = 2$ and $k = 4$.

\begin{theorem}
For each fraction $p/q\in [4/5, 1)$, there exist infinitely many $2$-connected cubic graphs $G$ such that $m_3(G) = p/q$.
\label{thm:cc2}
\end{theorem}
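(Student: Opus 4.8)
The plan is to assemble the required graphs as a cyclic chain (``necklace'') of \emph{links}, where by a link we mean a cubic graph with one edge deleted, leaving two vertices of degree $2$ (the \emph{attachment vertices}). We use \emph{Petersen links} $L=P-e$, which will each force $3$ uncovered edges, together with small $3$-edge-colourable \emph{fillers} such as $K_4$ minus an edge (and at least one more filler of a different size), which contribute no forced uncovered edges but let us tune the total number of edges. Concretely, arranging links $L_1,\dots,L_t$ in a cycle and joining consecutive attachment vertices by single edges yields a cubic graph which is easily seen to be $2$-connected (each link is $2$-connected with distinct attachment vertices) and in which, for each $i$, the two edges leaving $L_i$ form a $2$-edge-cut.

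First I would record the relevant facts about the Petersen graph $P$. Every edge of $P$ lies in exactly two of its six perfect matchings and in none of the triples, so by inclusion--exclusion any three perfect matchings of $P$ cover exactly $12$ of the $15$ edges; equivalently, using the identity $n_0=n_2+2n_3$ (with $n_j$ the number of edges lying in exactly $j$ of the three matchings, valid in every cubic graph), exactly three edges are always left uncovered. I would also extract the structural facts that $P-u-v$ has only two perfect matchings, that these miss exactly two of its edges, and that no perfect matching of $P$ avoiding the edge $e=uv$ contains both of those two edges.

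The core of the argument is a local analysis at the $2$-edge-cuts. Since each side of such a cut has an even number of vertices, every perfect matching meets it in $0$ or $2$ edges; hence each of three perfect matchings $M_1,M_2,M_3$ of a necklace uses, at every link, both or neither of its two cut edges, and, propagating around the cycle, each $M_j$ uses either all of the connecting edges or none of them. Splitting into cases according to how many $M_j$ use the connecting edges, one checks --- and this is the main obstacle --- that a Petersen link always has at least $3$ uncovered edges in its interior when some $M_j$ uses its cut edges, and at least $2$ uncovered interior edges with both cut edges then uncovered when none does; a filler link trivially contributes at least $0$. Charging each uncovered cut edge one half to each of its two incident links gives the lower bound: any three perfect matchings of a necklace containing $k$ Petersen links leave at least $3k$ edges uncovered. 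For the reverse, designate $M_1$ to use all connecting edges and $M_2,M_3$ none, and inside each Petersen link take $M_2,M_3$ to be two \emph{distinct} perfect matchings of $P-e$ and $M_1$ a perfect matching of $P-u-v$; then (checking that the analogous pattern covers each filler completely) exactly $3k$ edges are uncovered. Thus $m_3$ of such a necklace equals $1-3k/N$ where $N$ is its number of edges.

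It remains to choose parameters. Given $p/q\in[4/5,1)$, the target uncovered ratio is $(q-p)/q\le 1/5$. For each integer $m$ I would take $k=(q-p)m$ Petersen links and add fillers accounting for $3m(5p-4q)$ further edges; this number is nonnegative precisely because $p/q\ge 4/5$, it is $0$ when $p/q=4/5$ (so the pure Petersen necklaces, one for each $k\ge 1$ --- with $k=1$ being $P$ itself --- already settle that case), and when $p/q>4/5$ it is a positive multiple of $3$ realizable by fillers once $m$ is large (every insertion of a filler changes the edge count by a multiple of $3$, and two fillers of suitable sizes generate all large such multiples). The resulting $2$-connected cubic graph then has $N=15k+3m(5p-4q)=3qm$ edges and exactly $3k$ uncovered edges in an optimal cover, so $m_3(G)=1-3k/N=1-(q-p)/q=p/q$; letting $m\to\infty$ gives infinitely many such graphs.
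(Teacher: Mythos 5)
Your proposal is correct and is essentially the paper's own argument: a circular chain of Petersen-minus-an-edge blocks plus $3$-edge-colourable fillers, the parity observation that each perfect matching uses all or none of the connecting edges, the bound of at least $3$ uncovered edges per Petersen block (counting uncovered cut edges with weight $1/2$), an explicit triple of matchings attaining it, and the same parameter tuning. The only differences are bookkeeping (the paper uses only the $K_4$ filler with the doubled parameters $a=2q-2p$, $b=5p-4q$ rather than two filler sizes and large $m$), plus your slight overstatement that \emph{any} three perfect matchings of $P$ cover exactly $12$ edges (true for distinct ones; only the bound ``at most $12$'' is needed), neither of which affects correctness.
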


\begin{theorem}
For each fraction $p/q\in [9/10, 1)$, there exist infinitely many cyclically $4$-connected cubic graphs $G$ (with girth $5$) such that $m_3(G) = p/q$.
\label{thm:cc4}
\end{theorem}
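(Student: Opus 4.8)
\textbf{Proof idea for Theorem~\ref{thm:cc4}.}
The plan is to realise each target $r=p/q\in[9/10,1)$ by a two-parameter family of ``necklaces''. Fix a small cubic fragment $B$ with four half-edges, obtained from the Petersen graph $P$ --- for instance through a dot-product-type surgery --- in such a way that every necklace built below is a snark, and fix a $3$-edge-colourable cubic fragment $F$ with four half-edges, cut from an infinite family of cyclically $4$-connected $3$-edge-colourable cubic graphs of girth $\ge 5$ (such a family is easy to produce, e.g.\ among bipartite cubic graphs). For an integer $j\ge 1$ and integers $\ell_1,\dots,\ell_j\ge\ell_0$, where $\ell_0$ is a threshold to be fixed, let $G=G(j;\ell_1,\dots,\ell_j)$ be the cubic graph obtained by arranging $j$ copies $B_1,\dots,B_j$ of $B$ cyclically and joining consecutive blocks $B_i$, $B_{i+1}$ (indices modulo $j$) through $\ell_i$ copies of $F$ placed in series. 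First I would settle the structural claims, which use only the girth and connectivity of $B$ and of the $F$-family together with routine cut-counting: $|E(G)| = e\,j + s\sum_i\ell_i$ for suitable constants $e,s$; every cycle of length at most $4$ lies inside a single block or a single copy of $F$ (here one needs $\ell_0$ large enough), so $G$ has girth $5$ since pentagons survive inside the blocks; and, provided each $\ell_i\ge\ell_0$, no cyclic edge-cut of $G$ has size at most $3$, so $G$ is cyclically $4$-connected.

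The heart of the argument is the identity
\[
m_3(G) \;=\; 1-\frac{d\,j}{e\,j+s\sum_i\ell_i},
\]
for a constant $d>0$, the ``defect'' of one block; for the natural choice of $B$ one expects $d=3$, matching $m_3(P)=4/5$. For the lower bound $m_3(G)\ge 1-dj/|E(G)|$ I would argue constructively: start from a $3$-edge-colouring of each copy of $F$, and for each block $B_i$ adjoin a fixed ``near-colouring'' of $B$ that is compatible with the colours appearing on the four half-edges and leaves exactly $d$ interior edges of $B_i$ uncovered; adjusting the colourings of the $F$-copies so that they agree across the $4j$ junctions then yields three perfect matchings of $G$ whose union misses precisely $dj$ edges. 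For the upper bound $m_3(G)\le 1-dj/|E(G)|$, note that if a union of three perfect matchings of any cubic graph misses $n_0$ edges then $n_0=n_2+2n_3$, where $n_k$ is the number of edges covered exactly $k$ times, so it suffices to show that every triple of perfect matchings is forced to cover, with multiplicity counted by $n_2$ and $n_3$, at least $d$ surplus edges inside each block. The trace of a triple of perfect matchings on a block is one of a bounded list of boundary patterns on its four half-edges; a case analysis inside $B$ --- using that $P$ is not $3$-edge-colourable, and that the cyclic structure of the necklace forbids every block from receiving a ``good'' boundary at once --- shows that no admissible pattern avoids $d$ surplus interior edges, and since consecutive blocks are insulated by $3$-edge-colourable filler of length $\ge\ell_0$, the contributions of the $j$ blocks are independent and add up to at least $dj$.

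It remains to hit the prescribed value and to obtain infinitely many graphs. Given $r=p/q\in[9/10,1)$ in lowest terms, set $a=q-p$, so $1\le a$ and $a/q\le 1/10$. Writing $L=\sum_i\ell_i$, the requirement $dj/(ej+sL)=a/q$ gives $L=(dq-ea)\,j/(sa)$; one calibrates $B$, $F$ and $\ell_0$ so that $e+s\ell_0=10d$, and then $L\ge j\ell_0$ holds \emph{if and only if} $a/q\le 1/10$, while the densest necklace (all $\ell_i=\ell_0$) has $m_3(G)=1-d/(e+s\ell_0)=9/10$ --- which is exactly why the realisable interval is $[9/10,1)$ and why this method cannot reach below it. Choosing $j$ to be a multiple of a constant depending only on $r$ makes $L$ a positive integer with $L\ge j\ell_0$, after which $L$ is split among the $j$ gaps with each $\ell_i\ge\ell_0$; letting $j$ run over those multiples gives infinitely many pairwise non-isomorphic graphs with $m_3=r$.

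The step I expect to be the genuine obstacle is the upper bound on $m_3$ in the displayed identity: one must prove that the Petersen-type obstruction \emph{localises}, so that each block is forced to pay its $d$ surplus edges no matter how the three perfect matchings behave globally, and that these costs cannot be shared across blocks or absorbed by the filler --- together with the companion fact that the construction creates \emph{no extra} uncoverable edges, so that $m_3(G)$ equals the claimed value exactly. It is in reconciling these two requirements with the demands of girth $5$ and cyclic $4$-connectivity that the threshold $\ell_0$, and hence the constant $9/10$, must be pinned down.
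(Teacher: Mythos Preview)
Your overall architecture---a circular necklace of ``bad'' $4$-poles interspersed with $3$-edge-colourable filler, then a linear-algebra step to hit any rational in an interval---is exactly the paper's strategy. The genuine gap is that you never construct the block $B$, and your proposed candidate does not work.

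You suggest taking $B$ as a $4$-pole carved from the Petersen graph and expect a defect $d=3$. But every $(2,2)$-pole obtained from the Petersen graph by removing two adjacent vertices, or by removing two edges, is $3$-edge-colourable; the paper says this explicitly (``for $k\ge 4$, all the multipoles created from the Petersen graph would be colourable, and thus unsuitable for our construction''). In particular the Blanu\v{s}a block---Petersen minus two adjacent vertices---is precisely the paper's \emph{filler} $B'$, not its obstruction. So the ``natural choice of $B$'' has $d=0$, and your cyclic argument (``the necklace forbids every block from receiving a good boundary at once'') would have to do all the work, which it cannot: a necklace of colourable $4$-poles with compatible boundary types is itself colourable.

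What the paper actually does is build a bespoke $20$-vertex $(2,2)$-pole $A'$ out of \emph{two} Blanu\v{s}a blocks plus four extra vertices, and then proves (their Lemma~\ref{lemma:a4}, a genuine case analysis tracking $S_0,S_2,S_3$ and chasing a contradiction around a $5$-cycle) that \emph{every} triple of perfect matchings leaves at least $3$ links uncovered, or $2$ links plus $2$ dangling edges---so the defect is $\ge 3$ per copy regardless of boundary pattern, with no global cyclic argument needed. The constant $9/10$ is then not a free ``calibration'' as you present it: it is forced by $|E(A')|=30$ and $d=3$, and the filler is allowed to be absent ($b=0$), so there is no threshold $\ell_0$. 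Your outline is correct in spirit, but the entire technical content of the theorem---finding a $4$-pole with unconditional defect $3$ on $30$ edges and proving that fact---is precisely the step you left as ``a case analysis inside $B$''.
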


If Conjecture \ref{c1} is true, then our answer for $k = 2$ is complete. However, for $k = 4$, a gap remains; little is known about the interval $(4/5, 9/10)$. We believe that the exclusion of the Petersen graph would shift the lower bound on $m_3^{(4)}$ higher.

\begin{conjecture}
There exists a constant $c_4\in(4/5, 9/10]$ such that for every cyclically $4$-connected cubic graph $G$ different from the Petersen graph $m_3(G)\ge c_4$.
\label{conj:c4}
\end{conjecture}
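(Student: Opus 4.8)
The plan is to first recast $m_3$ in a form that isolates the obstruction to a proper $3$-edge-colouring. Fix three perfect matchings $M_1,M_2,M_3$ of a cubic graph $G$ on $n$ vertices, and let $a_i$ denote the number of edges lying in exactly $i$ of them. Counting edges and edge-slots gives $a_0+a_1+a_2+a_3 = 3n/2$ and $a_1+2a_2+3a_3 = 3n/2$, whence $a_0 = a_2+2a_3$ and
\[
m_3(G) \;=\; 1-\min_{M_1,M_2,M_3}\frac{a_2+2a_3}{3n/2}.
\]
Thus proving $m_3(G)\ge c_4$ amounts to exhibiting three perfect matchings whose pairwise and triple overlaps are small; the Petersen graph is exactly the case $a_2+2a_3=3$, giving $m_3=4/5$. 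The first task would be to establish the sharp value $4/5$ as a lower bound, which I would attempt through the Berge--Fulkerson conjecture as suggested in the introduction: given six perfect matchings covering every edge exactly twice, each edge is left uncovered by precisely $\binom{4}{3}=4$ of the $\binom{6}{3}=20$ triples, so the \emph{average} triple covers $4/5$ of $E(G)$ and therefore some triple covers at least that much.

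The gap in Conjecture~\ref{conj:c4} then has to come from a \emph{stability} phenomenon: for a cyclically $4$-connected $G$ different from the Petersen graph I would argue that the coverage cannot be constant across all $20$ triples, so the \emph{best} triple strictly beats the average. The core of the argument would be a rigidity statement: if the best triple covers only $4/5+o(1)$ of the edges, then around each uncovered edge the three matchings must locally reproduce the pattern forced in the Petersen graph. Here cyclic $4$-connectivity is essential and is exactly the hypothesis flagged in the introduction, where low $m_3$ at high connectivity is described as hard to attain: in graphs with small cyclic cuts one may graft Petersen-like fragments and retain many uncovered edges, whereas $4$-connectivity should force these local patterns to propagate and eventually close up into a global copy of the Petersen graph, contradicting $G\neq$ Petersen. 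Concretely, I would try to show that the set of uncovered and of doubly/triply covered edges forms a structured subgraph (a union of ``Petersen atoms'') whose boundary is controlled by the connectivity, and then run a discharging or counting argument, improving the $27/35$ bound of Kaiser et al.\ \cite{kaiser} by reinvesting the slack that their proof loses at small edge-cuts.

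The hard part will be twofold, and I expect it to be the main obstacle. First, the whole approach is conditioned on Berge--Fulkerson (or at least on a sufficiently symmetric double cover by perfect matchings), so even the baseline $4/5$ is not available unconditionally; a direct attack must instead build three low-overlap matchings from the structure of cyclically $4$-connected cubic graphs without a double cover in hand, for which I know no general mechanism. Second, and more seriously, quantifying the stability gap requires understanding the \emph{second-order} distribution of coverage over triples of matchings, which couples intimately to how the members of the cover intersect; turning the heuristic ``asymmetry forces a better-than-average triple'' into a fixed constant $c_4$ appears to demand a full near-extremal characterisation of snarks, a problem at least as hard as Conjecture~\ref{c1} itself. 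For this reason I would in practice first seek a conditional theorem (assuming Berge--Fulkerson and proving stability relative to it), together with a computer search verifying the gap for all cyclically $4$-connected snarks up to a moderate order, so as to pin down the value of $c_4$ before attempting the general rigidity argument.
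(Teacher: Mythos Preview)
The statement you are attempting to prove is labelled a \emph{conjecture} in the paper, and the paper offers no proof of it. The surrounding text only motivates the conjecture: Theorem~\ref{thm:cc4} explains why the upper endpoint $9/10$ is natural (there are infinitely many cyclically $4$-connected snarks with $m_3$ equal to $9/10$), and a computer-found $28$-vertex snark with $m_3=37/42$ is cited to show that $c_4\le 37/42$ if $c_4$ exists at all. There is no argument in the paper for your proposal to be compared against.

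What you have written is, as you essentially concede in your final paragraph, a research programme rather than a proof, and two concrete gaps deserve to be named. First, even granting Berge--Fulkerson, the averaging over the $\binom{6}{3}$ triples gives only $m_3(G)\ge 4/5$; your ``stability'' step, meant to push this to a strict inequality, has no content beyond a hope---you give no mechanism by which cyclic $4$-connectivity together with $G\neq$ Petersen forces the coverage to be non-constant across triples, and the discharging/rigidity sketch (``Petersen atoms'' propagating and closing up) is not a proof but a wish list. Second, and more seriously for the actual statement, even if you could show $m_3(G)>4/5$ for each individual $G$, this would not yield the \emph{uniform} constant $c_4>4/5$ that the conjecture demands: one must also exclude a sequence of cyclically $4$-connected snarks $G_n$ with $m_3(G_n)\to 4/5$, and nothing in your outline addresses accumulation at the boundary.
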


On the other hand, a lower bound on $m_3^{(4)}$ cannot be moved all the way towards $9/10$. Using an exhaustive computer search, we discovered a snark $H$ on $28$ vertices with cyclic connectivity $4$ such that for any collection of three perfect matchings, there are at least $5$ uncovered edges, thus $m_3(H) = 37/42\approx 0.88$. No snark with up to $28$ vertices appears to provide a lower value of $m_3$ (almost all have a cover with only $3$ uncovered edges). The example of $H$ shows that if $c_4$ from Conjecture~\ref{conj:c4} exists, it is at most $37/42$.


We will describe our constructions of cubic graphs in terms of \emph{multipoles}, that is, cubic graphs with dangling edges. An edge of a multipole is a \emph{link} if it connects two vertices, a \emph{dangling edge} if only one of its ends is incident with a vertex, or an \emph{isolated edge} (multipoles with isolated edges are not used anywhere in this paper). The terminology on multipoles is fairly standard; details can be found in \cite{morphology}.

The notion of perfect matching can be straightforwardly extended to multipoles without isolated edges. A perfect matching of a multipole is a set of links and dangling edges such that each vertex is incident with exactly one of them.

\section{Cyclic connectivity 2}

Let $A$ and $B$ be the $2$-poles obtained from the Petersen graph and $K_4$, respectively, by cutting an edge into a pair of dangling edges. Join $a$ copies of $A$ and $b$ copies of $B$ in a circular fashion: one dangling edge of each of the multipoles is connected to the previous multipole and the other to the next one in the circular ordering (the copies of $A$ and $B$ can be arranged in an arbitrary order). Denote the resulting graph $G_{a,b}$. Since each copy of $A$ or $B$ is separated from the rest of the graph by a $2$-edge-cut, the cyclic connectivity of the resulting graph is $2$. We now prove that it has the properties required for our construction.

\begin{lemma}
Let $A$ be the $2$-pole obtained from the Petersen graph by cutting an edge into a pair of dangling edges. The following holds:
\begin{itemize}
    \item[(a)] If $M$ is a perfect matching of $A$, then it either contains both dangling edges or none of them.
    \item[(b)] If $M_1$, $M_2$, $M_3$ are perfect matchings of $A$ such that none of them contains a dangling edge, then there are at least $2$ links in $A$ not covered by any of $M_i$. For a suitable triple of matchings, it is possible to achieve equality.
    \item[(c)] If $M_1$, $M_2$, $M_3$ are perfect matchings of $A$ such that one of them contains a dangling edge, then there are at least $3$ links in $A$ not covered by any of $M_i$. For a suitable triple of matchings, it is possible to achieve equality.
\end{itemize}
\label{lemma:A2}
\end{lemma}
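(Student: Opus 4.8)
The plan is to analyse perfect matchings of the Petersen graph $P$ and how cutting a fixed edge $e=uv$ interacts with them. For part (a), recall that $P$ is vertex-transitive and edge-transitive, and every edge of $P$ lies in exactly one perfect matching (indeed $P$ has exactly six perfect matchings, each pair meeting in exactly one edge). In the $2$-pole $A$ obtained by cutting $e$, a perfect matching $M$ of $A$ either uses both dangling half-edges at $u$ and $v$ — corresponding to a perfect matching of $P$ containing $e$ — or uses neither, in which case $M$ is a perfect matching of $P$ avoiding $e$, in which case it must match $u$ and $v$ to their other neighbours. Since $M$ cannot contain exactly one dangling edge (the vertex at its non-dangling end would then be matched twice, once along the dangling edge at $u$ say and once inside), the dichotomy follows immediately from the parity/covering constraint at $u$ and $v$; I would state this with one short sentence.

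For parts (b) and (c), the key observation is the standard fact that the union of any three perfect matchings of the Petersen graph $P$ covers exactly $12$ of its $15$ edges, so exactly $3$ edges are always uncovered (this is because $P$ has a unique $2$-factor structure: any two of its six perfect matchings share exactly one edge, and one checks that three of them cover $15-3=12$ edges — in fact any three perfect matchings leave exactly three edges, forming a perfect matching, uncovered; more precisely, the complement of the union of any $k$ perfect matchings for $k\le 3$ is itself a union of perfect matchings by the structure of $P$). Granting this, in case (b) each $M_i$ avoids $e$, so each $M_i$ is also a perfect matching of $P$, and in $P$ their union leaves exactly $3$ edges uncovered; one of these uncovered edges may or may not be $e$ itself. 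If $e$ is uncovered in $P$, then exactly $3-1 = 2$ links of $A$ are uncovered; if $e$ is covered in $P$ by some $M_i$, that contradicts $M_i$ avoiding the dangling edges (since covering $e$ means using both half-edges). Hence $e$ is always among the three uncovered edges, giving exactly $2$ uncovered links, so in fact (b) asserts equality always — I would double-check this subtlety, but the natural reading is that at least $2$ holds and equality is attainable; the uncovered count of links is $3$ minus $[e\text{ uncovered in }P]$, and since all $M_i$ avoid $e$ it cannot be covered, so the answer is exactly $2$. In case (c), at least one $M_i$, say $M_1$, contains both dangling edges, i.e. $M_1$ extends to a perfect matching $M_1'$ of $P$ containing $e$; the other two matchings may or may not contain the dangling edges, but in all subcases I extend each $M_i$ to a perfect matching $M_i'$ of $P$ (if $M_i$ contains the dangling edges, add $e$; if not, it is already a perfect matching of $P$). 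Then $M_1'\cup M_2'\cup M_3'$ covers exactly $12$ edges of $P$, leaving $3$ uncovered, and $e$ is covered (by $M_1'$), so all $3$ uncovered edges are links of $A$ and none of them is $e$. These $3$ links are uncovered by $M_1\cup M_2\cup M_3$ as well (covering a link of $A$ is the same as covering it in $P$, except possibly near $u,v$ where a dangling edge of some $M_i$ replaces $e$ — but a link incident to $u$ or $v$ that is uncovered in $P$ stays uncovered in $A$). Thus at least $3$ links are uncovered, with equality attainable.

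For the equality (achievability) statements in (b) and (c), I would exhibit explicit triples of perfect matchings of $P$: pick three of the six perfect matchings of $P$ whose union misses a prescribed triple of edges. For (b) I choose three that all avoid $e$ (possible since only one of the six matchings contains $e$, so five avoid it, and among those five one can find three whose union leaves exactly $e$ and two other edges uncovered); for (c) I choose the matchings so that the unique one of the six containing $e$ is among the three picked, and the union leaves three edges distinct from $e$ uncovered. These are finite checks on the Petersen graph that I would either do by hand or cite as routine. The main obstacle is pinning down exactly which statement (b) is making — whether "at least $2$" is the intended bound when in fact the count is forced to be exactly $2$ — and carefully handling the bookkeeping of links incident to $u$ and $v$ when translating between uncovered edges of $P$ and uncovered links of $A$; everything else reduces to the well-known enumeration of the six perfect matchings of the Petersen graph and their pairwise and triple-wise intersections.
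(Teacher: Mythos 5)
Your proposal takes essentially the same route as the paper: identify perfect matchings of $A$ with perfect matchings of the Petersen graph $P$ by rejoining the dangling edges into the cut edge $e$, invoke the fact that a union of three perfect matchings of $P$ covers at most $12$ of its $15$ edges, and split according to whether $e$ is covered (for (a), the parity of the vertex count is the right reason, not the ``matched twice'' remark). Do correct a few side claims that your argument fortunately does not depend on: each edge of $P$ lies in exactly \emph{two} of the six perfect matchings (so four, not five, avoid $e$), ``exactly $3$ uncovered edges'' holds only when the three matchings are distinct (with repetitions more edges are uncovered, which still yields the required lower bounds), and three edges cannot form a perfect matching of a $10$-vertex graph.
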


\begin{proof}
Part (a) is true because $A$ has an even number of vertices. 

Since the Petersen graph $P$ is edge-transitive, the choice of the edge to cut when creating $A$ does not affect the argument. A perfect matching of $A$ containing the dangling edges corresponds to a perfect matching of $P$ obtained by rejoining the dangling edges. In $P$, at most $12$ edges can be covered by a union of three perfect matchings, so at least $3$ edges will be uncovered (with equality achievable). The uncovered edges can either be three links, or two links and one link cut into a pair of dangling edges during the creation of $A$. This proves (b) and (c).
\end{proof}

\begin{lemma}
Let $B$ be the $2$-pole obtained from a $3$-edge-colourable cubic graph by cutting an edge into a pair of dangling edges. The following holds:
\begin{itemize}
    \item[(a)] If $M$ is a perfect matching of $B$, then it either contains both dangling edges or none of them.
    \item[(b)] There exist three perfect matchings $M_1$, $M_2$, $M_3$ of $B$ such that all the links and the dangling edges of $B$ are covered by them.
\end{itemize}
\label{lemma:B2}
\end{lemma}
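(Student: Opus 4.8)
The plan is to mirror the structure of the proof of Lemma~\ref{lemma:A2}, exploiting the correspondence between perfect matchings of $B$ and perfect matchings of the underlying $3$-edge-colourable cubic graph $H$. Write $e = xy$ for the edge of $H$ that is cut, and let $d_x$ and $d_y$ denote the two resulting dangling edges of $B$, incident with $x$ and $y$ respectively. A perfect matching $M$ of $B$ containing both $d_x$ and $d_y$ corresponds to the perfect matching $(M\setminus\{d_x,d_y\})\cup\{e\}$ of $H$, and a perfect matching of $B$ containing neither $d_x$ nor $d_y$ is simply a perfect matching of $H$ avoiding $e$; conversely, every perfect matching of $H$ yields a perfect matching of $B$ in exactly one of these two ways, depending on whether it contains $e$.

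For part~(a), suppose for contradiction that $M$ contains exactly one dangling edge, say $d_x$. Then $M\setminus\{d_x\}$ consists entirely of links of $B$, i.e.\ of edges of $H$, and it saturates every vertex of $H$ except $x$. Hence it is a matching of $H$ saturating $|V(H)|-1$ vertices. But $H$ is cubic, so $|V(H)|$ is even and $|V(H)|-1$ is odd, contradicting the fact that a matching saturates an even number of vertices. This proves~(a).

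For part~(b), fix a proper $3$-edge-colouring of $H$ with colour classes $C_1,C_2,C_3$; each $C_i$ is a perfect matching of $H$ and $C_1\cup C_2\cup C_3=E(H)$. The cut edge $e$ lies in exactly one class, say $C_1$. By the correspondence above, $M_1:=(C_1\setminus\{e\})\cup\{d_x,d_y\}$ is a perfect matching of $B$, while $M_2:=C_2$ and $M_3:=C_3$ are perfect matchings of $B$ as well, since all of their edges are links. As $C_1\cup C_2\cup C_3$ contains every edge of $H$, every link of $B$ lies in some $M_i$, and $M_1$ contains both dangling edges; thus $M_1,M_2,M_3$ together cover all links and both dangling edges of $B$, as required.

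This lemma is essentially routine: the parity argument in~(a) is the same as in Lemma~\ref{lemma:A2}(a), and in~(b) the only point requiring a moment of care is the bookkeeping of which colour class contains the cut edge $e$, so that the dangling edges are assigned to the matching that carries them. No genuine obstacle arises.
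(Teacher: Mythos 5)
Your proof is correct and follows the same route as the paper: part (a) is the parity argument (the paper phrases it as ``$B$ has an even number of vertices''), and part (b) takes the colour classes of a $3$-edge-colouring, with the class containing the cut edge carrying the two dangling edges. You merely spell out the correspondence with the underlying graph $H$ more explicitly than the paper does.
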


\begin{proof}
Part (a) is true because $B$ has an even number of vertices. A triple of suitable matchings for part (b) are the colour classes of a $3$-edge-colouring of $B$.
\end{proof}

\begin{lemma}
For any integers $a\ge 1$ and $b\ge 0$,
$$
m_3(G_{a,b})=\frac{4a+2b}{5a+2b}.
$$
\label{lemma:fraction2}
\end{lemma}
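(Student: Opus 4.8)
The plan is to compute $|E(G_{a,b})|$ and then to show that the minimum number of edges of $G_{a,b}$ left uncovered by any triple of perfect matchings is exactly $3a$; the value of $m_3$ then follows by dividing. First I would record the edge count: each copy of $A$ contributes $14$ links (the Petersen graph has $15$ edges and one is split), each copy of $B$ contributes $5$ links, and there are $a+b$ edges joining consecutive multipoles of the circular arrangement, so $|E(G_{a,b})| = 14a+5b+(a+b) = 15a+6b$, which also equals $\tfrac32(10a+4b)$ as a check against the number of vertices. Hence it suffices to prove that the largest possible number of covered edges is $15a+6b-3a = 12a+6b$, i.e. that $m_3(G_{a,b}) = \frac{12a+6b}{15a+6b} = \frac{4a+2b}{5a+2b}$.

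For the lower bound on the number of uncovered edges, the key observation is a propagation phenomenon around the cycle of blocks. Label the joining edges $e_1,\dots,e_{a+b}$, with $e_i$ incident with blocks $X_i$ and $X_{i+1}$ (indices modulo $a+b$). For any perfect matching $M$ of $G_{a,b}$, restricting $M$ to a block $X$ gives a perfect matching of the $2$-pole $X$, so by Lemma~\ref{lemma:A2}(a) and Lemma~\ref{lemma:B2}(a) it contains both dangling edges of $X$ or neither; therefore $e_{i-1}\in M \iff e_i\in M$ for every $i$, and running around the cycle we get that $M$ contains all of $e_1,\dots,e_{a+b}$ or none of them. Now take perfect matchings $M_1,M_2,M_3$. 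If at least one $M_j$ contains the joining edges, then all $a+b$ of them are covered, but for every copy of $A$ at least one $M_j$ restricts to a perfect matching of $A$ using its dangling edges, so by Lemma~\ref{lemma:A2}(c) that copy contributes at least $3$ uncovered links; this already gives at least $3a$ uncovered edges. If no $M_j$ contains the joining edges, then all $a+b$ of them are uncovered and, by Lemma~\ref{lemma:A2}(b), each copy of $A$ contributes at least $2$ uncovered links, for a total of at least $(a+b)+2a = 3a+b \ge 3a$. Either way at least $3a$ edges of $G_{a,b}$ are uncovered.

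For the matching upper bound I would exhibit a triple $M_1,M_2,M_3$ leaving exactly $3a$ edges uncovered, with $M_1$ containing every joining edge and $M_2,M_3$ containing none. On each copy of $B$ (coming from $K_4$ with edge $uv$ split, vertices $u,v,w,x$), the matching $M_1$ is forced to restrict to the unique perfect matching $\{$dangling edges$\}\cup\{wx\}$, while the two perfect matchings $\{uw,vx\}$ and $\{ux,vw\}$ of $K_4$ avoiding $uv$ serve as the restrictions of $M_2$ and $M_3$; together they cover all five links of $B$. On each copy of $A$ I need three perfect matchings of the Petersen graph $P$, exactly one of which contains the split edge $f$, and whose union has $12$ edges (so the $3$ uncovered edges are links different from $f$). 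Such a triple exists: in the ``outer pentagon / inner pentagram'' model one may take the all-spokes matching together with two suitable rotations of the ``two outer edges'' matching, then choose $f$ to be an inner edge lying in exactly one of the three. Feeding these block matchings into $G_{a,b}$ yields three perfect matchings whose uncovered set is precisely $3$ links in each copy of $A$ and nothing else, i.e. $3a$ edges in total. Combined with the previous paragraph, the minimum number of uncovered edges is exactly $3a$, and the stated value of $m_3(G_{a,b})$ follows.

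The hard part will be the construction in the last paragraph rather than the counting. One has to resist making all three matchings ``saturated'' (using all joining edges) or all ``unsaturated'': the copies of $B$ behave well only when exactly one of the three matchings uses the dangling edges, since there is a unique perfect matching of $B$ doing so — this is what pins the pattern down to ``one saturated, two unsaturated'' — and then one must still verify by an explicit computation in $P$ that the equality case of Lemma~\ref{lemma:A2}(c) can be realised with precisely one of the three matchings containing the split edge.
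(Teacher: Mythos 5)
Your proposal is correct and follows essentially the same route as the paper: count the $15a+6b$ edges, use Lemmas~\ref{lemma:A2}(a) and \ref{lemma:B2}(a) to show each perfect matching uses all outer (joining) edges or none, split into the two cases to get at least $3a$ uncovered edges via Lemma~\ref{lemma:A2}(b)/(c), and realise equality with one matching containing all outer edges and two avoiding them. The only difference is presentational: where the paper simply invokes the equality clauses of Lemmas~\ref{lemma:A2}(c) and \ref{lemma:B2}(b), you make the block-level triples explicit (including the useful observation that exactly one of the three Petersen matchings should contain the split edge), which is a legitimate filling-in of the same argument rather than a new one.
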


\begin{proof}
Let us call links in the copies of $A$ and $B$ \emph{inner edges} and the edges arising from joining dangling edges \emph{outer edges}. There are $14$ and $5$ inner edges in each copy of $A$ and $B$, respectively. In addition, there are $a+b$ outer edges.  Altogether, this gives $15a+6b$ edges in $G_{a,b}$.

Consider a cover of $G_{a,b}$ by three perfect matchings $M_1$, $M_2$, $M_3$. If an outer edge is covered, then all outer edges are covered thanks to Lemmas~\ref{lemma:A2}(a) and \ref{lemma:B2}(a). Consequently, each copy of $A$ contains at least $3$ uncovered edges by Lemma~\ref{lemma:A2}(c). Otherwise, no outer edge is covered, and then we have at least $2$ uncovered edges in each copy of $A$ by Lemma~\ref{lemma:A2}(b) plus $a+b$ outer edges.

In either case, at least $3a$ edges are not covered, so at most $12a+6b$ are covered, hence
$$
m_3(G_{a,b}) \le {12a+6b\over 15a+6b} = {4a+2b\over 5a+2b}.
$$

The equality is easily achieved: we take a perfect matching $M_1$ containing all outer edges, and pick the rest of $M_1$ and both $M_2$ and $M_3$ according to Lemmas~\ref{lemma:A2}(c) and \ref{lemma:B2}(b).
\end{proof}

\begin{proof}[Proof of Theorem~\ref{thm:cc2}]
Consider the graph $G_{a,b}$ for $a = 2q-2p$, $b = 5p-4q$ (where $a>0$ because $p/q < 1$ and $b\ge 0$). According to Lemma \ref{lemma:fraction2},
$$
m_3(G_{a,b})={4(2q-2p)+2(5p-4q)\over 5(2q-2p)+2(5p-4q)} = {p\over q},
$$
so $G_{a,b}$ satisfies the required property. And so does $G_{ma, mb}$ for any positive integer $m$, thus there are infinitely many suitable graphs.
\end{proof}

\section{Cyclic connectivity 4}

The construction in the previous section is based on two ingredients. First, the key property of the multipole $A$ is its uncolourability, which ensures at least one uncovered edge in $A$. Second, addition of some colourable parts ``dilutes'' the effect of multipoles $A$, thus pushing $m_3$ upwards.

The fact that there are always three uncovered edges in $A$ (if we also count the dangling edges, each of them with weight $1/2$), and not just one, allows us to keep the lower bound of the interval $(4/5, 1)$ very low (optimal if Conjecture \ref{c1} is true). A similar method can be used for cyclic connectivity $k=3$: by removing a vertex from the Petersen graph, we still have an uncolourable multipole, which ensures at least one uncovered edge for every $15$ edges in the graph. The resulting ratio $23/27\approx 0.85$ is, however, rather far from $4/5$ \cite{AgarskyBP}. For $k\ge 4$, all the multipoles created from the Petersen graph would be colourable, and thus unsuitable for our construction.

Uncolourable multipoles that can be turned into snarks with cyclic connectivity $k$ are known for every $k\in\{4, 5, 6\}$: one can create them from snarks of large resistance (see \cite{KOCHOL2002281, Oddness, STEFFEN2004191}). Such multipoles can be used to construct a cyclically $k$-connected graph with $m_3$ equal to any fraction from the interval $(x, 1)$ for some $x$. It is, however, unclear how to find a multipole offering the best ratio of uncovered edges to size. One can employ a computer in search for best construction blocks \cite{AgarskyBP}, but the results are disappointing. The problem of finding all perfect matchings is computationally rather hard (both theoretically and in practice, even when one employs a SAT or an AllSAT solver). Moreover, larger snarks (or multipoles) tend to provide a lower proportion of uncovered edges compared to small ones. Here, we provide a construction that is verifiable by hand and results in a bound at least as good as anything we achieved with the help of a computer.

Consider the $(2,2)$-pole $A'$ depicted in Fig.~\ref{fig:A4}, composed of two copies $H_1$ and $H_2$ of the Blanuša block (obtained from the Petersen graph by removing two adjacent vertices \cite{blanusaJM}), $4$ additional vertices, and several additional edges. The dangling edges incident with $v_1$ and $v_6$ form the first connector, while the dangling edges incident with $v_{18}$ and $v_{19}$ form the second connector.

\begin{lemma}
A union of any three perfect matchings leaves uncovered at least $3$ links of $A'$ or at least $2$ links of $A'$ and $2$ dangling edges of $A'$. 
\label{lemma:a4}
\end{lemma}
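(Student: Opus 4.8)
The plan is to exploit the same uncolourability argument used for the $2$-connected case, but now applied to the two Blanuša blocks $H_1$ and $H_2$ inside $A'$, and to track carefully how perfect matchings of $A'$ restrict to each block. First I would recall the relevant structural facts about the Blanuša block $B_0$ (the Petersen graph minus two adjacent vertices): it is an $(1,2)$-pole (or, joined suitably, a $3$-pole) with three dangling edges, and it has the crucial property inherited from the Petersen graph's uncolourability, namely that one cannot $3$-edge-colour $B_0$ so that the three dangling edges receive a prescribed "sensible" colouring — more precisely, the standard fact is that in any proper $3$-edge-colouring of $B_0$ the three dangling edges do not all get the same colour, and in fact the Petersen graph being a snark forces a deficiency. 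I would isolate exactly the statement I need: in any assignment of three perfect matchings $M_1,M_2,M_3$ to $B_0$, a certain minimum number of links-plus-half-dangling-edges is left uncovered, with the count depending on which dangling edges are used by the matchings. This is the analogue of Lemma~\ref{lemma:A2}(b)--(c) for the Blanuša block rather than for $A$.

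Next I would analyse the global structure of $A'$: the four extra vertices $v_1,v_6,v_{18},v_{19}$ (together with the extra edges) glue $H_1$ and $H_2$ together and produce the two $2$-edge-cut connectors. Because $A'$ has an even number of vertices, any perfect matching of $A'$ uses an even number of edges from each connector, hence $0$ or $2$ from each; so the connectors behave exactly like the dangling-edge pairs of $A$ in Section~2. Then for each of the (few) cases — according to how many of the $M_i$ use each connector pair — I would push the uncovered-edge count down into $H_1$ and $H_2$ using the Blanuša-block lemma, and add up contributions from the four extra vertices and the edges joining the blocks to them. The arithmetic should, in every case, yield either at least $3$ uncovered links of $A'$, or at least $2$ uncovered links together with $2$ uncovered dangling edges, matching the statement.

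Concretely, the key steps in order are: (1) state and prove (or cite from \cite{blanusaJM}) the deficiency lemma for a single Blanuša block under three perfect matchings, with the case split on dangling-edge usage; (2) observe the parity constraint forcing each connector of $A'$ to be fully covered or fully uncovered by each $M_i$, and note that a dangling edge of $A'$ is covered by $M_i$ iff the whole connector containing it is; (3) translate a fixed triple $(M_1,M_2,M_3)$ on $A'$ into triples of matchings on $H_1$ and on $H_2$ (being careful that a perfect matching of $A'$ restricted to a block, plus possibly one of the internal gluing edges, is a perfect matching of that block as a multipole); (4) run the finite case analysis on the $\le$ small number of connector-usage patterns, in each case summing the block deficiencies and the forced uncovered edges among the extra vertices/edges to reach the bound. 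The main obstacle I anticipate is step (3) together with the bookkeeping in step (4): one must set up the multipole bracketing so that "uncovered link of the block" maps to "uncovered link of $A'$" without double counting the gluing edges, and then verify that none of the connector-usage cases (especially the one where, say, two of the three matchings cover one connector and one covers the other) accidentally allows the blocks' deficiencies to cancel against already-counted edges. Getting the weighting of dangling edges right — so that "$2$ links and $2$ dangling edges" genuinely emerges rather than "$2$ links and $1$ dangling edge", as it would for a single block — is the delicate point, and it presumably hinges on the fact that $A'$ contains \emph{two} Blanuša blocks, so two independent deficiencies are available to absorb into the two connectors.
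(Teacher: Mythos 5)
Your plan founders at step (1): there is no ``deficiency lemma'' for a single Blanuša block, because the Blanuša block is $3$-edge-colourable. (Indeed, this is exactly why the paper can use a bare Blanuša block as the diluting gadget $B'$, all of whose edges get covered; and in the proof of Lemma~\ref{lemma:fraction4} the block $H_1$ is given a proper colouring of all its links.) The relevant property of the block is not uncolourability but a colour-forcing property: in any $3$-edge-colouring certain dangling edges are forced to carry equal colours. Also, removing two adjacent vertices from the Petersen graph yields a $4$-pole, not a $3$-pole, and your stated fact about ``three dangling edges not all getting the same colour'' is the property of the Petersen graph minus a single vertex (where in fact they \emph{must} all get the same colour), not of the block used here. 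Since each block's forced deficiency under three perfect matchings is zero, your step (4) of ``summing block deficiencies'' over connector-usage patterns cannot produce any positive lower bound, let alone the bound of $3$ links (or $2$ links plus $2$ dangling edges): the uncovered edges of $A'$ need not lie inside either block at all, e.g.\ the optimal cover in Lemma~\ref{lemma:fraction4} leaves $v_{18}v_{19}$, $v_{12}v_{13}$, $v_{17}v_{16}$ uncovered.

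Separately, even a correct derivation of the uncolourability of $A'$ (which the paper gets by combining the colour-forcing properties of $H_1$ with the incidence of $e_1$ and $e_2$) only yields \emph{one} uncovered edge. The whole difficulty of Lemma~\ref{lemma:a4} is amplifying this to three, and your proposal contains no mechanism for that. The paper's argument works with the sets $S_i$ of edges covered exactly $i$ times: a doubly or triply covered edge forces uncovered edges at both of its ends, so $S_0\cup S_2$ induces a subgraph of degree $2$ whose components are even cycles (length $\ge 6$ by girth $5$, hence $\ge 3$ uncovered edges) or paths ending in dangling edges; the only configuration below the claimed bound is the single short path $v_1v_0v_4v_5v_6$, which is excluded by a parity argument on the $5$-cycle $v_2v_3v_7v_8v_9$ using the colouring properties of $H_2$. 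A per-block bookkeeping over connector patterns, as you propose, has no counterpart to this local amplification or to the exclusion of that extremal configuration, so the approach would not close.
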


\begin{proof}
Let $S_i$ be the set of edges of $A'$ (possibly dangling) covered by precisely $i$ perfect matchings. An edge from $S_2$ has exactly one neighbour from $S_0$ at each of its ends; an edge from $S_0$ has a neighbour from $S_2$ or $S_3$ at each of its ends.

It is a well-known property of the Blanuša block $H_1$ that the edges $e_3$ and $e_1$ must have the same colour in any $3$-edge-colouring; ditto for $e_3$ and $e_2$. Since $e_1$ is incident with $e_2$, $A'$ cannot be $3$-edge-colourable, and thus $E(A')\not\subseteq S_1$. Hence $S_3\cap S_2 \neq \emptyset$.

If a link $e\neq v_{18}v_{19}$ of $A'$ belongs to $S_3$, then $e$ has at least $3$ incident links in $A'$ that are uncovered (plus another link or a dangling edge), and if $e = v_{18}v_{19}\in S_3$, we have two uncovered links and two uncovered dangling edges incident to $e$, which proves the lemma. Otherwise, no link of $A'$ belongs to $S_3$, so each link of $A'$ belongs to at most two perfect matchings. If a dangling edge $e$ of $A'$ belongs to $S_3$, the two links incident with it are not covered, so each of them has a neighbour different from $e$ that is in $S_2$, and thus neighbours of neighbours are uncovered, hence we will also have at least $3$ uncovered edges in $A'$. We are left with the case $S_3 = \emptyset$ (so $S_2\neq \emptyset$). In this case, both $S_0$ and $S_2$ form a matching, and thus the subgraph $P_{02}$ induced by $S_0\cup S_2$ only has vertices of degree $2$.

If there is a cycle in $P_{02}$, it must be of length at least $6$, because it must be even and the girth of $A'$ is $5$. It thus contains at least $3$ uncovered edges. Otherwise, $P_{02}$ is a union of paths, each of the paths ending with a dangling edge on both ends. The shortest such path $P=v_1v_0v_4v_5v_6$ contains $5$ vertices, any other such path has at least $6$ vertices. But a path with $6$ vertices either contains $3$ uncovered links, or $2$ uncovered links and $2$ uncovered dangling edges.

We will prove that $P_{02} = P$ leads to a contradiction. We start by observing that $H_2$ has every edge covered exactly once (because $P_{02}\cap H_2 = \emptyset$). Thanks to the colouring properties of the Blanuša block \cite{blanusaJM}, edges $e_2$ and $e_3$ must belong to the same perfect matching, say, $M_1$. The other two matchings will be denoted by $M_2$ and $M_3$.

Let us denote $(v)$ the dangling edge incident with a vertex $v$. There are two possibilities for $P$, depending on whether $(v_1)$ is covered or not.

Case 1: $S_0 = \{v_1v_0, v_4v_5, (v_6)\}$. The edge $e_1$ belongs to both $M_2$ and $M_3$. Then $v_4v_3\in M_1$. Since the edges $v_7v_3$ and $v_7v_8$ are both incident with an edge from $M_1$, necessarily $v_7v_6\in M_1$. Then $v_6v_5\in M_2\cap M_3$, and so $v_5v_9\in M_1$. Look at the $5$-cycle $v_2v_3v_7v_8v_9$: none of its edges can belong to $M_1$, so it is covered by $M_2\cup M_3$. But that is obviously impossible for an odd cycle.

Case 2: $S_0 =\{(v_1), v_0v_4, v_5v_6\}$. Since $v_0v_1\in M_2\cap M_3$, necessarily $v_1v_2\in M_1$. Neither of $v_9v_2$ and $v_9v_8$ can be in $M_1$, so $v_9v_5\in M_1$. Then $v_5v_4\in M_2\cap M_3$, hence $v_4v_3\in M_1$. Again the $5$-cycle $v_2v_3v_7v_8v_9$ must be covered by $M_2\cup M_3$, a contradiction.
\end{proof}

\begin{figure}
    \centering
    \includegraphics{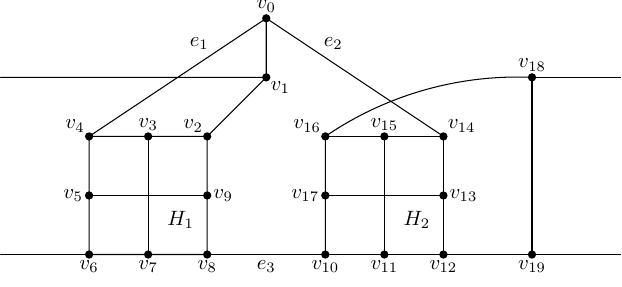}
    \caption{The multipole $A'$ used to construct cyclically $4$-connected graphs.}
    \label{fig:A4}
\end{figure}

For the ``diluting'' gadget, we take the $(2,2)$-pole $B'$ obtained from the Blanuša block by a suitable arrangement of its dangling edges. If the dangling edges are denoted by $f_1$, $f_2$, $f_3$, $f_4$ (viewed clockwise along the $8$-cycle), then the connectors will be $(f_1, f_4)$ and $(f_3, f_2)$.

Join $a$ copies of $A'$ ($a\ge 1$) and $b$ copies of $B'$ ($b\ge 0$) in a circular fashion; copies of $A'$ and $B'$ can be placed in an arbitrary order. Denote the resulting graph $G^4_{a,b}$. It has girth $5$ (easily verifiable) and cyclic connectivity $4$ (as sketched in the proof below). 

An I-extension is an operation that consists of inserting a vertex of degree $2$ into two edges of a graph and joining the added vertices by an edge. If we allow multigraphs, we can pick the same edge twice and then the I-extension would create a parallel edge.
Clearly, an I-extension performed on a cubic graph results in a cubic graph.

\begin{lemma}
Let $G'$ arise by I-extension from a cubic graph $G$ with cyclic connectivity $k$. The cyclic connectivity of $G'$ is either at least $k$ or equal to the length of the shortest cycle containing the edge $e$ added in the I-extension. Specifically, if $k\ge 4$ and $e$ creates neither a triangle nor a parallel edge, the cyclic connectivity of $G'$ is at least $4$.
\label{lemma:I}
\end{lemma}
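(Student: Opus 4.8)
The cyclic (edge-)connectivity of a graph is the least size of an edge set whose deletion leaves at least two components each containing a cycle. Write $u_1,u_2$ for the two vertices added in the I-extension, $e_1,e_2$ for the edges of $G$ into which they are inserted, $e=u_1u_2$ for the new edge, $c'$ for the cyclic connectivity of $G'$, and $\ell$ for the length of a shortest cycle of $G'$ through $e$ (such a cycle exists, as $G'$ is readily seen to be bridgeless once $G$ is). The key structural fact I would exploit is that $G$ is recovered from $G'-e$ by suppressing the two degree-$2$ vertices $u_1,u_2$, so any bipartition $V(G')=W_1\cup W_2$ restricts to a bipartition of $V(G)$, and the edges crossing it in $G$ can be read off from those crossing it in $G'$. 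The plan is to prove two statements: (1) $c'\le\ell$, and (2) if $c'<\ell$ then $c'\ge k$. Together these give the dichotomy, and the final sentence follows because forbidding a triangle and a parallel edge through $e$ forces $\ell\ge 4$, so in all cases $c'\ge\min(k,\ell)\ge 4$.

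For (1) I would take a shortest cycle $C_0$ through $e$; it has no chord, since a chord would split off a still shorter cycle through $e$, so $G'[V(C_0)]=C_0$ and exactly $\ell$ edges leave $V(C_0)$. These $\ell$ edges form an edge cut one side of which contains the cycle $C_0$; it is a cyclic edge cut provided the other side also contains a cycle, which holds unless $G'$ is exceptionally small — in particular it always holds in the settings where the lemma is applied. Hence $c'\le\ell$.

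For (2) I would take a minimum cyclic edge cut $F'$ with $|F'|=c'<\ell$ and sides $W_1,W_2$, where $G'[W_1]$ and $G'[W_2]$ each contain a cycle, and split into two cases. If $e\in F'$, then $u_1,u_2$ lie in different parts and the witnessing cycles avoid $e$; restricting to $G$ yields a set $F$ of crossing edges with $|F|\le|F'|-1$ (each half-edge of $e_i$ in $F'$ contributes at most $e_i$ to $F$, suppression creates no new crossings, and $e$ contributes nothing), and $F$ is a cyclic edge cut of $G$ because the two cycles survive suppression and stay separated, so $k\le|F'|-1$ and $c'\ge k$. If $e\notin F'$, then $u_1,u_2$ lie in one part, say $W_1$; writing $W_i'=W_i\setminus\{u_1,u_2\}$ and letting $F$ be the edges of $G$ between $W_1'$ and $W_2'$, one gets $|F|\le|F'|$ and $G[W_2']=G'[W_2]$ still has a cycle, so if $G[W_1']$ has a cycle too then $F$ is a cyclic edge cut of $G$ and $c'\ge|F|\ge k$. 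Otherwise $G[W_1']$ is a forest; since suppressing degree-$2$ vertices and deleting leaves preserves the presence of cycles, $G'[W_1]-e$ is then a forest as well, so $G'[W_1]$ has at most $|W_1|$ edges, whence counting degrees inside $W_1$ gives $|F'|=3|W_1|-2|E(G'[W_1])|\ge|W_1|$; but the cycle in $G'[W_1]$ must use $e$ and therefore has at least $\ell$ vertices, so $|W_1|\ge\ell$ and $|F'|\ge\ell$, contradicting $|F'|<\ell$.

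The main obstacle is this last case: ruling out that the cycle surviving on the $W_1$-side is an essentially new short cycle absent from $G$. The decisive point is that deleting the single edge $e$ destroys every cycle on that side — this simultaneously forces the cycle to pass through $e$ (so it is long enough to bring in $\ell$) and forces $G'[W_1]$ to be sparse enough that the cut $F'$ around it is large. Everything else is routine: tracking how the half-edges of $e_1,e_2$ map to $e_1,e_2$ under suppression to see that the restriction never increases the number of crossing edges (and drops by one when $e$ is cut), and checking that cyclicity of the cut is inherited by $G$.
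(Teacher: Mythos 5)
Your proof is correct and follows essentially the same route as the paper's: split on whether the new edge $e$ lies in a minimum cycle-separating cut of $G'$, reduce to a cycle-separating cut of $G$ whenever possible, and in the remaining case (the side containing $e$ becomes acyclic once $e$ is deleted) bound the cut size from below by the number of vertices on that side, which is in turn at least the length $\ell$ of a shortest cycle through $e$. Your write-up is in fact more careful than the paper's terse argument about the half-edge/suppression correspondence and about the ``equal to $\ell$'' direction (which the paper never actually proves), so no gaps to report.
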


\begin{proof}
If $e$ belongs to a cycle-separating cut $C'$ of $G'$, then $C'-\{e\}$ is a cycle-separating cut of size $|C'|-1$ in $G$, so $|C'|\ge k+1$. If $e$ belongs to a subgraph $H'$ separated by a cycle-separating cut $C'$ in $G'$, and the edges of $C'$ do not form a cycle-separating edge-cut in $G$, then the only possibility is that the addition of $e$ created a cycle in $H'$, while there was no cycle in the original subgraph $H$ in which we performed the $I$-extension (indeed: $H$ is separated from the rest of $G$ by the cut $C$ formed by the edges in $C'$; in case $C'$ contains an edge $e'$ that only arose during the I-extension, we put in $C$ the edge corresponding to $e'$ into which a vertex of degree $2$ was added when performing the I-extension). The number of edges leaving $H$ in $G$, i.e. $|C|$, is at least $|H|+2$ (since $H$ is cubic and acyclic), and the cycle created by adding $e$ has length at most $|H|+2$, so the cut $C'$ (with size equal to $|C|$) cannot push cyclic connectivity below the length of the newly added cycle containing $e$.
\end{proof}

Before calculating $m_3$ of the constructed graphs, we will explain why they are cyclically $4$-connected. Any copy of $A'$ arises by two I-extensions (adding the edges $v_{18}v_{19}$ and $v_0v_1$) from a simpler graph containing just two Blanuša blocks in place of $A'$. Each Blanuša block arises by $4$ subsequent I-extensions applied to two edges with no endvertex in common. A suitable sequence of I-extensions creates a copy of $A'$ from two edges (one corresponds to the dangling edges incident with $v_1$ and $v_{18}$, the other to the two remaining dangling edges). I-extensions do not decrease cyclic connectivity below $4$ in our case (no triangles or parallel edges, so Lemma~\ref{lemma:I} applies), and chains of copies of $B'$ (i.e. Blanuša blocks) are known to be cyclically $4$-connected (e.g. because they are part of generalized Blanuša snarks), which completes our explanation of why $G^4_{a,b}$ is cyclically $4$-connected. It is true also in case $b = 0$; we verified it for $G^4_{1,0}$ with a computer.

\begin{lemma}
For any integers $a\ge 1$ and $b\ge 0$,
$$
m_3(G^4_{a,b})={9a+4b\over 10a+4b}.
$$
\label{lemma:fraction4}
\end{lemma}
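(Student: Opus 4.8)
The plan is to mirror the proof of Lemma~\ref{lemma:fraction2}: count edges, use the structural lemmas to lower-bound the number of uncovered edges for any triple of perfect matchings, and then exhibit a triple attaining equality.

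First I would set up the edge count. Each copy of the Blanu\v{s}a block contributes a fixed number of links; a copy of $A'$ consists of two such blocks plus $4$ extra vertices and the connecting edges, so each copy of $A'$ has some fixed number $\alpha$ of inner links and each copy of $B'$ has some fixed number $\beta$ of inner links. Adding the $a+b$ outer edges arising from the circular joining, the total is $\alpha a + \beta b + (a+b)$; the values of $\alpha$ and $\beta$ should be read off Fig.~\ref{fig:A4} (I expect the count to work out so that $|E(G^4_{a,b})| = 20a + 8b$, consistent with the target fraction $\tfrac{2(9a+4b)}{2(10a+4b)}$ having denominator $20a+8b$).

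Next comes the lower bound on uncovered edges. Fix a cover by three perfect matchings $M_1,M_2,M_3$. Since each copy of $B'$ and each copy of $A'$ has an even number of vertices, for each connector (a pair of dangling edges glued into two outer edges, cf.\ the $2$-pole argument) a perfect matching of a copy uses either both or neither of the two dangling edges of that connector — I would phrase this as the analogue of Lemmas~\ref{lemma:A2}(a) and \ref{lemma:B2}(a) for the two connectors of $A'$ and $B'$. Then I split into cases according to how the outer edges behave, and in each copy of $A'$ I invoke Lemma~\ref{lemma:a4}: a copy of $A'$ forces at least $3$ uncovered inner links, or at least $2$ uncovered inner links together with $2$ uncovered dangling edges. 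In the latter situation each of those two uncovered dangling edges is identified with a distinct outer edge, which I count with weight $1$; so in every case a single copy of $A'$ accounts for at least $3$ uncovered edges of $G^4_{a,b}$ (links inside it, or links inside it plus adjacent outer edges), and crucially no outer edge is charged to two different copies of $A'$ because the two connectors of $A'$ lead to two different neighbouring gadgets. Hence at least $3a$ edges of $G^4_{a,b}$ are uncovered, giving at most $|E| - 3a$ covered edges and the upper bound $m_3(G^4_{a,b}) \le \tfrac{9a+4b}{10a+4b}$.

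For the matching lower bound I would construct an explicit cover achieving exactly $3a$ uncovered edges. Take the equality cases of Lemma~\ref{lemma:a4} in each copy of $A'$ (the computation in that proof shows the $3$-uncovered-links outcome is realizable) and the colourable cover of each copy of $B'$ (the three colour classes of a $3$-edge-colouring of the Blanu\v{s}a block, as in Lemma~\ref{lemma:B2}(b)); I need to check these local choices can be glued consistently along the circular chain, i.e.\ that on each connector the three matchings agree on which dangling edges are used. Arranging the local covers so that all outer edges are covered (each copy of $A'$ taking the ``$3$ uncovered links, dangling edges all used'' branch, each $B'$ covering everything) makes the gluing automatic, and then precisely $3a$ inner links of $G^4_{a,b}$ remain uncovered, so $m_3(G^4_{a,b}) = \tfrac{9a+4b}{10a+4b}$.

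The main obstacle is the charging argument for the upper bound: one must be careful that the ``$2$ uncovered links $+$ $2$ uncovered dangling edges'' outcome in a copy of $A'$ really does produce $2$ genuinely uncovered outer edges that are not simultaneously charged by the adjacent gadget, and that no double counting occurs across the chain — this is exactly where the fact that $A'$ is a $(2,2)$-pole with its two dangling edges of a connector going to the same neighbour (versus the four dangling edges being split as two connectors to two neighbours) matters, and it is the step that the proof of Lemma~\ref{lemma:fraction2} handled trivially but which needs the connector structure here. A secondary, more routine obstacle is verifying the realizability and consistent gluing of the equality cover; this should follow from the explicit colourings of the Blanu\v{s}a block already used in Lemma~\ref{lemma:a4}.
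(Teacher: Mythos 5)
Your outline follows the paper's strategy (count edges, extract at least $3a$ uncovered edges from Lemma~\ref{lemma:a4}, then exhibit an optimal cover), but the step you yourself single out as the main obstacle is genuinely not closed, and your fix for it is wrong. The claim that ``no outer edge is charged to two different copies of $A'$ because the two connectors of $A'$ lead to two different neighbouring gadgets'' rules out nothing: the danger is not one copy charging an edge twice, but two \emph{adjacent} copies of $A'$ both landing in the ``$2$ links $+$ $2$ dangling edges'' outcome with their uncovered dangling edges lying in the connector they \emph{share}, so the same outer edge is charged by both. Lemma~\ref{lemma:a4} does not control which dangling edges are the uncovered ones, so this situation cannot be excluded. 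The paper's proof handles it by weighting an uncovered dangling edge $1/2$: each copy of $A'$ then still contributes weight at least $3$ (the second outcome gives $2+2\cdot\tfrac12=3$), while every edge of $G^4_{a,b}$ receives total weight at most $1$, which immediately yields at least $3a$ uncovered edges. Your weight-$1$ charging can be salvaged, but only by an extra counting step you did not make (a ``$2+2$'' copy in fact charges $4$ edges, which offsets each possible double charge); as written, the upper bound on $m_3$ is not proved.

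Two further points do not survive inspection. First, the edge count: $A'$ has $20$ vertices, hence $28$ links and $4$ dangling edges, $B'$ has $10$ links, and each of the $a+b$ junctions contributes \emph{two} outer edges, so $|E(G^4_{a,b})|=30a+12b$, not $20a+8b$; with your count, $3a$ uncovered edges would give $\tfrac{17a+8b}{20a+8b}\neq\tfrac{9a+4b}{10a+4b}$, so the guess is inconsistent with the rest of your own argument (the correct ratio is $\tfrac{27a+12b}{30a+12b}$, which simplifies by $3$). Second, the equality cover: the proof of Lemma~\ref{lemma:a4} is purely a lower bound and exhibits no cover of $A'$ with exactly three uncovered links, so ``the computation in that proof shows the $3$-uncovered-links outcome is realizable'' is not available to you; moreover the gluing is not automatic even when all dangling edges are covered, since each outer edge must be used by the \emph{same} matchings on both sides. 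The paper resolves both by an explicit defective $3$-edge-colouring of $A'$ (uncovered edges $v_{18}v_{19}$, $v_{12}v_{13}$, $v_{16}v_{17}$, three doubly coloured edges, extension via the colouring properties of the Blanuša block) arranged so that every connector, in $A'$ as well as in $B'$ with connectors $(f_1,f_4)$ and $(f_3,f_2)$, carries the same pair of colours $1$ and $2$; this is what lets copies be concatenated in arbitrary order. These constructions need to be supplied, not just promised.
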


\begin{proof}
The graph $G = G^4_{a,b}$ has $30a + 12b$ edges. According to Lemma~\ref{lemma:a4}, at least $3a$ of them are uncovered in any union of three perfect matchings. Indeed, an uncovered link of $A'$ contributes an uncovered edge to $G$; an uncovered dangling edge corresponds to an uncovered edge in $G$ which is possibly counted twice if it connects two blocks $A'$, so we only counts its contribution as $1/2$. Hence
$$
m_3(G)\le {27a+12b\over 30a+12b}.
$$
We will prove the equality by describing three perfect matchings that cover $27a+12b$ edges of $G$. This collection of matchings can be visualised as a proper $3$-edge-colouring with specific defects (where colour classes correspond to the perfect matchings in the covering). In each copy of $A'$, the edges $v_{18}v_{19}$, $v_{12}v_{13}$, $v_{17}v_{16}$ are left uncovered and the edges $v_{12}v_{19}$, $v_{18}v_{16}$, $v_{17}v_{13}$ get pairs of colours 1 and 3, 2 and 3, 1 and 2, respectively. Each of the remaining edges of $G$ gets exactly one colour. 

There is a unique way of extending the colouring to the edges of $H_2$; in that colouring, both $v_0v_{14}$ and $(v_{18})$ get colour 1, while both $v_8v_{10}$ and $(v_{19})$ get colour 2. Next, we set the colours of $v_1v_2$, $v_0v_4$ and ($v_6)$ to 2 and the colour of $(v_1)$ to 1. Since the Blanuša block $H_1$ has all incoming edges coloured by the same colour 2, it is possible to colour all its links \cite{blanusaJM}. This colouring of $A'$ is compatible with a colouring of $B'$ which uses colour 1 for the edges $f_1$, $f_3$ and colour 2 for $f_2$, $f_4$ (such a colouring is known to exist \cite{blanusaJM}). It does not matter whether we joined $A'$ with $A'$, $A'$ with $B'$, or $B'$ with $B'$ when creating $G$---they all use the same pair of colours on the pairs of edges in the connectors used in the join.
\end{proof}

\begin{proof}[Proof of Theorem~\ref{thm:cc4}]
Consider the graph $G_{a,b}$ for $a = 4q-4p$, $b = 10p-9q$ ($a>0$ because $p/q < 1$). According to Lemma \ref{lemma:fraction4},
$$
m_3(G_{a,b})={9(4q-4p)+4(10p-9q)\over 10(4q-4p)+4(10p-9q)} = {p\over q},
$$
so $G^4_{a,b}$ satisfies the required property. And so does $G^4_{ma, mb}$ for any positive integer $m$, thus there are infinitely many suitable graphs.
\end{proof}

Our bound is better than the one mentioned by Agarsky \cite{AgarskyBP}, but this is only because his computation contains a mistake. However, he uses a gadget $A'$ with properties only verified by a computer, and it is not clear whether the code verifying it was correct (the version of his code we have access to contains a mistake: certain coverings resulting in $2$ uncovered links and $1$ uncovered dangling edge are ignored, which might affect his gadget $A'$).

\section{Acknowledgements}

This work was partially supported from the research grants APVV-19-0308, APVV-23-0076, VEGA 1/0743/21, VEGA 1/0727/22, and VEGA 1/0173/25.

\bibliographystyle{plain}
\bibliography{coverBy3PM}

\end{document}